\documentclass{amsart}%
\usepackage{amsfonts}
\usepackage{amsmath}
\usepackage{amssymb}
\usepackage{graphicx}%
\setcounter{MaxMatrixCols}{30}
\providecommand{\U}[1]{\protect\rule{.1in}{.1in}}
\textwidth=16cm
\newtheorem{theorem}{Theorem}
\theoremstyle{plain}

\newtheorem{corollary}{Corollary}

\newtheorem{definition}{Definition}

\newtheorem{lemma}{Lemma}

\numberwithin{equation}{section}
\begin{document}
\title[Delayed perturbation of Mittag-Leffler functions]{Delayed perturbation of Mittag-Leffler functions and their applications to
fractional linear delay differential equations}
\author{N. I . Mahmudov}
\address{Department of Mathematics\\
Eastern Mediterranean University \\
Famagusta, T.R. North Cyprus\\
via Mersin 10, Turkey}
\email{nazim.mahmudov@emu.edu.tr}
\urladdr{}
\date{December 26, 1997}
\subjclass[2000]{Primary 34A08; Secondary 39B05 }
\keywords{fractional linear systems, delayed perturbation, Mittag-Leffler type matrix function}

\begin{abstract}
In this paper, we propose a delayed perturbation of Mittag-Leffler type matrix
function, which is an extension of the classical Mittag-Leffler type matrix
function and delayed Mittag-Leffler type matrix function. With the help of the
delayed perturbation of Mittag-Leffler type matrix function, we give an
explicit formula of solutions to linear nonhomogeneous fractional delay
differential equations.

\end{abstract}
\maketitle

\section{Introduction}

It is known that a solution of a linear system $y^{\prime}\left(  t\right)
=Ay\left(  t\right)  $, $t\geq0$ has the form $y\left(  t\right)
=e^{At}y\left(  0\right)  ,$ where exponential matrix $e^{At}$ is also called
fundamental matrix. However, it becomes more complex for seeking a fundamental
matrix for linear delay system
\begin{align}
y^{\prime}\left(  t\right)   &  =Ay\left(  t\right)  +By\left(  t-h\right)
,\ \ t\geq0\ ,h>0,\label{ld1}\\
y\left(  t\right)   &  =\varphi\left(  t\right)  ,\ \ -h\leq t\leq0,\nonumber
\end{align}
where $A,B$ are two constant square matrices. Under the assumptions that $A$
and $B$ are permutation matrices, Khusainov \& Shuklin \cite{khus1} give a
representation of a solution of a linear homogeneous system with delay by
introducing the concept of delayed matrix exponential $e_{h}^{Bt}$
corresponding to delay $h$ and matrix $B.$ They proved that fundamental matrix
of linear delay system (\ref{ld1}) (delayed perturbation of exponential matrix
$e^{At}$) can be given by $e^{At}e_{h}^{B_{1}\left(  t-h\right)  }%
,\ B_{1}=e^{-Ah}B.$ Notice that the fractional analogue of the same problem
was considered by Li and Wang \cite{wang1} in the case $A=\Theta.$ For more
recent contributions on oscillating system with pure delay, relative
controllability of system with pure delay, asymptotic stability of nonlinear
multidelay differential equations, finite time stability of differential
equations, one can refer to \cite{diblik3}-\cite{pos2} and reference therein.

Motivated by Khusainov \& Shuklin \cite{khus1}, Li and Wang \cite{wang1}, we
extend to consider representation of solutions of a fractional delay
differential equation of the form by introducing delayed perturbation of
Mittag-Leffler function%
\begin{align}
\left(  ^{C}D_{-h^{+}}^{\alpha}y\right)  \left(  t\right)   &  =Ay\left(
t\right)  +By\left(  t-h\right)  +f\left(  t\right)  ,\ \ t\in\left(
0,T\right]  ,h>0,\label{de1}\\
y\left(  t\right)   &  =\varphi\left(  t\right)  ,\ \ -h\leq t\leq0,\nonumber
\end{align}
where $\left(  ^{C}D_{-h^{+}}^{\alpha}y\right)  \left(  \cdot\right)  $ is the
Caputo derivative of order $\alpha\in\left(  0,1\right)  $, $A,B\in R^{n\times
n}$ denotes constant matrix, and $\varphi:\left[  -h,0\right]  \rightarrow
R^{n}$ is an arbitrary Caputo differentiable vector function, $f\in C\left(
\left[  -h,T\right]  ,R^{n}\right)  $, $t=lh$ for a fixed natural number $l$.

To end this section, we would like to state the main contribution as follows:

(i) We propose delayed perturbation $X_{h,\alpha,\beta}^{A,B}\left(  t\right)
$ of Mittag-Leffler type functions, by means of the matrix equations
(\ref{re1}). We show that for $B=\Theta$ the function $X_{h,\alpha,\beta
}^{A,B}\left(  t\right)  $ coincide with Mittag-Leffler type function of two
paramemters $t^{\alpha-1}E_{\alpha,\beta}\left(  At^{\alpha}\right)  $. For
$A=\Theta$ $X_{h,\alpha,\beta}^{A,B}\left(  t\right)  $ coincide with delayed
Mittag-Leffler type matrix function of two parameters $E_{h,\alpha,\beta}%
^{B}\left(  t-h\right)  .$

(ii) We explicitly represent the solution of fractional delay linear system
(\ref{de1}) via delayed perturbation of Mittag-Leffler type function.

\begin{definition}
\label{def:01}Mittag-Leffler type matrix function of two parameters
$\Phi_{\alpha,\beta}\left(  A,z\right)  :R\rightarrow R^{n\times n}$ is
defined by%
\[
\Phi_{\alpha,\beta}\left(  A,z\right)  :=z^{\beta-1}E_{\alpha,\beta}\left(
Az^{\alpha}\right)  :=z^{\beta-1}%
{\displaystyle\sum\limits_{k=0}^{\infty}}
\frac{A^{k}z^{\alpha k}}{\Gamma\left(  k\alpha+\beta\right)  },\ \ \ \alpha
,\beta>0,z\in R.
\]

\end{definition}

\begin{definition}
\label{def:11}Delayed Mittag-Leffler type matrix function of two parameters
$E_{h,\alpha,\beta}^{B}\left(  t\right)  :R\rightarrow R^{n\times n}$ is
defined by%
\begin{equation}
E_{h,\alpha,\beta}^{B}\left(  t\right)  :=\left\{
\begin{tabular}
[c]{ll}%
$\Theta,$ & $-\infty<t\leq-h,$\\
$I\frac{\left(  h+t\right)  ^{\beta-1}}{\Gamma\left(  \beta\right)  },$ &
$-h<t\leq0,$\\
$I\frac{\left(  h+t\right)  ^{\beta-1}}{\Gamma\left(  \beta\right)  }%
+B\frac{t^{\alpha+\beta-1}}{\Gamma\left(  \alpha+\beta\right)  }+B^{2}%
\frac{\left(  t-h\right)  ^{2\alpha+\beta-1}}{\Gamma\left(  2\alpha
+\beta\right)  }+...+B^{k}\frac{\left(  t-\left(  k-1\right)  h\right)
^{k\alpha+\beta-1}}{\Gamma\left(  k\alpha+\beta\right)  },$ & $\left(
k-1\right)  h<t\leq kh.$%
\end{tabular}
\ \ \ \right.  \label{ml2}%
\end{equation}

\end{definition}

In order to define delayed perturbation of Mittag-Leffler type matrix
functions, we introduce the following matrix equation for $Q_{k}\left(
s\right)  ,$ $k=1,2,...$
\begin{align}
Q_{k+1}\left(  s\right)   &  =AQ_{k}\left(  s\right)  +BQ_{k}\left(
s-h\right)  ,\nonumber\\
Q_{0}\left(  s\right)   &  =Q_{k}\left(  -h\right)  =\Theta,\ \ Q_{1}\left(
0\right)  =I,\nonumber\\
k  &  =0,1,2,...,s=0,h,2h,... \label{re1}%
\end{align}
Simple calculations show that%
\[%
\begin{tabular}
[c]{|l|l|l|l|l|l|l|}\hline
& $s=0$ & $s=h$ & $s=2h$ & $s=3h$ & $\cdots$ & $s=ph$\\\hline
$Q_{1}\left(  s\right)  $ & $I$ & $\Theta$ & $\Theta$ & $\Theta$ & $\cdots$ &
$\Theta$\\\hline
$Q_{2}\left(  s\right)  $ & $A$ & $B$ & $\Theta$ & $\Theta$ & $\cdots$ &
$\Theta$\\\hline
$Q_{3}\left(  s\right)  $ & $A^{2}$ & $AB+BA$ & $B^{2}$ & $\Theta$ & $\cdots$
& $\Theta$\\\hline
$Q_{4}\left(  s\right)  $ & $A^{3}$ & $A\left(  AB+BA\right)  +BA^{2}$ &
$AB^{2}+B\left(  AB+BA\right)  $ & $B^{3}$ & $\cdots$ & \\\hline
$\cdots$ & $\cdots$ & $\cdots$ & $\cdots$ &  & $\cdots$ & $\Theta$\\\hline
$Q_{p+1}\left(  s\right)  $ & $A^{p}$ & $\cdots$ & $\cdots$ & $\cdots$ &
$\cdots$ & $B^{p}$\\\hline
\end{tabular}
\ \ \ \
\]

\begin{definition}
\label{def:21}Delayed perturbation of two parameter Mittag-Leffler type matrix
function $X_{h,\alpha,\beta}^{A,B}$ generated by $A,B$\ is defined by%
\begin{equation}
X_{h,\alpha,\beta}^{A,B}\left(  t\right)  :=\left\{
\begin{tabular}
[c]{ll}%
$\Theta,$ & $-h\leq t<0,$\\
$I,$ & $t=0,$\\
$%
{\displaystyle\sum\limits_{i=0}^{\infty}}
{\displaystyle\sum\limits_{j=0}^{p-1}}
Q_{i+1}\left(  jh\right)  \dfrac{\left(  t-jh\right)  ^{i\alpha+\beta-1}%
}{\Gamma\left(  i\alpha+\beta\right)  },$ & $\left(  p-1\right)  h<t\leq ph.$%
\end{tabular}
\ \ \right.  \label{ml1}%
\end{equation}

\end{definition}

\begin{lemma}
Let $X_{h,\alpha,\beta}^{A,B}\left(  t\right)  $ be defined by (\ref{ml1}).
Then the following holds true:

\begin{description}
\item[(i)] if $A=\Theta$ then $X_{h,\alpha,\beta}^{A,B}\left(  t\right)
=E_{h,\alpha,\beta}^{B}\left(  t-h\right)  ,\ \ \left(  p-1\right)  h\leq
t-h\leq ph$,

\item[(ii)] if $B=\Theta$ then $X_{h,\alpha,\beta}^{A,B}\left(  t\right)
=t^{\beta-1}E_{\alpha,\beta}\left(  At^{\alpha}\right)  ,$

\item[(iii)] if $\alpha=\beta=1$ and $AB=BA$ then $X_{h,1,1}^{A,B}\left(
t\right)  =e^{At}e_{h}^{B\left(  t-h\right)  }$, $\left(  p-1\right)  h<t\leq
ph$.
\end{description}
\end{lemma}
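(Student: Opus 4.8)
The strategy is to verify each of the three reductions directly from the defining series (\ref{ml1}), by computing what $Q_{i+1}(jh)$ and the associated series collapse to in each special case. The key tool throughout is the table of values of $Q_k(s)$, which encodes the fact that $Q_{i+1}(jh)$ is the sum of all words of length $i$ in $A,B$ having exactly $j$ letters equal to $B$; I would first make this combinatorial reading precise by induction on $i$ using the recurrence (\ref{re1}), since it drives all three parts.

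For part (ii), when $B=\Theta$ only the $j=0$ column survives, and from the table $Q_{i+1}(0)=A^i$. Thus (\ref{ml1}) becomes $\sum_{i=0}^\infty A^i\,t^{i\alpha+\beta-1}/\Gamma(i\alpha+\beta)=t^{\beta-1}\sum_{i=0}^\infty (At^\alpha)^i/\Gamma(i\alpha+\beta)=t^{\beta-1}E_{\alpha,\beta}(At^\alpha)$, matching Definition \ref{def:01}. For part (i), when $A=\Theta$ the recurrence forces $Q_{i+1}(jh)$ to vanish unless $j=i$ (only the all-$B$ word survives), giving $Q_{i+1}(ih)=B^i$. The double sum then reduces to the single sum $\sum_{i=0}^{p-1} B^i (t-ih)^{i\alpha+\beta-1}/\Gamma(i\alpha+\beta)$, which I would match term by term against the piecewise definition (\ref{ml2}) of $E_{h,\alpha,\beta}^B$ evaluated at $t-h$, after aligning the shift so that $(p-1)h\le t-h\le ph$; the bookkeeping on which terms appear in which interval is the only delicate point here.

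Part (iii) is the substantive one and the main obstacle. When $\alpha=\beta=1$ the series (\ref{ml1}) becomes $\sum_{i=0}^\infty\sum_{j=0}^{p-1} Q_{i+1}(jh)\,(t-jh)^{i}/i!$ on $(p-1)h<t\le ph$. The plan is to show this equals the product $e^{At}e_h^{B(t-h)}$, where $e_h^{B(t-h)}=\sum_{j=0}^{p-1} B^j (t-jh)^j/j!$ is the delayed matrix exponential. Using the commutativity hypothesis $AB=BA$, the words counted by $Q_{i+1}(jh)$ all collapse to $\binom{i}{j}A^{i-j}B^{j}$, so $Q_{i+1}(jh)=\binom{i}{j}A^{i-j}B^j$; establishing this closed form by induction on $i$ (invoking Pascal's rule through the recurrence \ref{re1}) is the heart of the argument. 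Substituting it and exchanging the order of summation, I would factor the double series as a Cauchy-type product and recognize $\sum_i \binom{i}{j}A^{i-j}(t-jh)^{i-j}/i!$ as contributing $e^{At}$ after reindexing $i\mapsto i-j$, leaving the $B^j(t-jh)^j/j!$ factors to assemble into $e_h^{B(t-h)}$. The convergence needed to justify the rearrangement is automatic since each inner sum over $j$ is finite and the outer sum is an everywhere-convergent exponential series, so the remaining work is purely the algebraic identification of the factored product.
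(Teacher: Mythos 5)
Your parts (i) and (ii) are correct and follow the paper's own computation: when $A=\Theta$ the recurrence (\ref{re1}) kills every $Q_{i+1}(jh)$ with $i\neq j$ and leaves $Q_{i+1}(ih)=B^{i}$, and when $B=\Theta$ only the column $j=0$ survives with $Q_{i+1}(0)=A^{i}$. Your combinatorial reading of $Q_{i+1}(jh)$ as the sum of all words of length $i$ in $A,B$ with exactly $j$ letters $B$, hence $Q_{i+1}(jh)=\binom{i}{j}A^{i-j}B^{j}$ under $AB=BA$, is also correct and is exactly the closed form the paper uses in (iii).

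The gap is in the final factorization step of (iii). After substituting $Q_{i+1}(jh)=\binom{i}{j}A^{i-j}B^{j}$ and reindexing $m=i-j$, the inner sum over $i$ for fixed $j$ is
\begin{equation*}
\sum_{i\geq j}\binom{i}{j}A^{i-j}B^{j}\frac{\left(  t-jh\right)  ^{i}}{i!}
=\frac{B^{j}\left(  t-jh\right)  ^{j}}{j!}\sum_{m\geq0}\frac{A^{m}\left(
t-jh\right)  ^{m}}{m!}=e^{A\left(  t-jh\right)  }\frac{B^{j}\left(
t-jh\right)  ^{j}}{j!},
\end{equation*}
so the exponential factor you extract is $e^{A\left(  t-jh\right)  }$, not $e^{At}$: its argument is shifted by $jh$ and depends on $j$. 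Consequently the double series does not factor as $e^{At}\sum_{j}B^{j}\left(  t-jh\right)  ^{j}/j!=e^{At}e_{h}^{B\left(  t-h\right)  }$ as you claim; instead it equals
\begin{equation*}
\sum_{j=0}^{p-1}e^{A\left(  t-jh\right)  }\frac{B^{j}\left(  t-jh\right)
^{j}}{j!}=e^{At}\sum_{j=0}^{p-1}\bigl(e^{-Ah}B\bigr)^{j}\frac{\left(
t-jh\right)  ^{j}}{j!}=e^{At}e_{h}^{B_{1}\left(  t-h\right)  },\qquad
B_{1}=e^{-Ah}B,
\end{equation*}
where commutativity is used to write $e^{-Ajh}B^{j}=\bigl(e^{-Ah}B\bigr)^{j}$. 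This is precisely where the paper's proof ends, and it agrees with the Khusainov--Shuklin formula quoted in the introduction. The identity you set out to prove, with plain $B$ inside the delayed exponential, is in fact false whenever $A\neq\Theta$ and $p\geq2$: already for scalars $a\neq0$, $b\neq0$ and $h<t\leq2h$, your claimed value $e^{at}\left(  1+b\left(  t-h\right)  \right)  $ differs from the true value $e^{at}+e^{a\left(  t-h\right)  }b\left(  t-h\right)  $. (The plain $B$ in the lemma's displayed formula is evidently a misprint for $B_{1}$; it is your factorization step, not the intended statement, that needs repair.)
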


\begin{proof}
(i) If $A=\Theta,$ then
\[
Q_{i+1}\left(  jh\right)  =\left\{
\begin{array}
[c]{c}%
\Theta,\ \ \ i\neq j,\\
B^{i},\ \ i=j,
\end{array}
\right.
\]
and $X_{h,\alpha,\beta}^{A,B}\left(  t\right)  $ coincides with $E_{h,\alpha
,\beta}^{B}\left(  t-h\right)  :$%
\begin{align*}
X_{h,\alpha,\beta}^{A,B}\left(  t\right)   &  =%
{\displaystyle\sum\limits_{i=0}^{p}}
B^{i}\dfrac{\left(  t-ih\right)  ^{i\alpha+\beta-1}}{\Gamma\left(
i\alpha+\beta\right)  }=\frac{t^{\beta-1}}{\Gamma\left(  \beta\right)
}+B\frac{\left(  t-h\right)  ^{\alpha+\beta-1}}{\Gamma\left(  \alpha
+\beta\right)  }+...+B^{p}\frac{\left(  t-ph\right)  ^{p\alpha+\beta-1}%
}{\Gamma\left(  p\alpha+\beta\right)  }\\
&  =E_{h,\alpha,\beta}^{B}\left(  t-h\right)  ,\ \ \left(  p-1\right)
h<t-h\leq ph.
\end{align*}

(ii) Trivially, from definition of $X_{h,\alpha,\beta}^{A,B}\left(  t\right)
$ we have: if $B=\Theta$, then%
\[
X_{h,\alpha,\beta}^{A,B}\left(  t\right)  =%
{\displaystyle\sum\limits_{i=0}^{\infty}}
A^{i}\frac{t^{i\alpha+\beta-1}}{\Gamma\left(  i\alpha+\beta\right)  }%
=t^{\beta-1}E_{\alpha,\beta}\left(  At^{\alpha}\right)  .
\]

(iii) It can be easily shown that $Q_{i+1}\left(  jh\right)  =\left(
\begin{array}
[c]{c}%
i\\
j
\end{array}
\right)  A^{i-j}B^{j}$. So, for $\left(  p-1\right)  h<t\leq ph$ and
$B_{1}=e^{-Ah}B$ we have
\begin{align*}
X_{h,1,1}^{A,B}\left(  t\right)   &  =%
{\displaystyle\sum\limits_{i=0}^{\infty}}
Q_{i+1}\left(  0\right)  \frac{t^{i}}{i!}+%
{\displaystyle\sum\limits_{i=1}^{\infty}}
Q_{i+1}\left(  h\right)  \frac{\left(  t-h\right)  ^{i}}{i!}+...+%
{\displaystyle\sum\limits_{i=1}^{\infty}}
Q_{i+1}\left(  \left(  p-1\right)  h\right)  \frac{\left(  t-\left(
p-1\right)  h\right)  ^{i}}{i!}\\
&  =%
{\displaystyle\sum\limits_{i=0}^{\infty}}
A^{i}\frac{t^{i}}{i!}+%
{\displaystyle\sum\limits_{i=1}^{\infty}}
\left(
\begin{array}
[c]{c}%
i\\
1
\end{array}
\right)  A^{i-1}B\frac{\left(  t-h\right)  ^{i}}{i!}+...+%
{\displaystyle\sum\limits_{i=p-1}^{\infty}}
\left(
\begin{array}
[c]{c}%
i\\
p-1
\end{array}
\right)  A^{i-p+1}B^{p-1}\frac{\left(  t-\left(  p-1\right)  h\right)  ^{i}%
}{i!}\\
&  =e^{At}+e^{A\left(  t-h\right)  }B\left(  t-h\right)  +...+%
{\displaystyle\sum\limits_{i=0}^{\infty}}
\left(
\begin{array}
[c]{c}%
i+p-1\\
p-1
\end{array}
\right)  A^{i}B^{p-1}\frac{\left(  t-\left(  p-1\right)  h\right)  ^{i+p-1}%
}{\left(  i+p-1\right)  !}\\
&  =e^{At}+e^{A\left(  t-h\right)  }B\left(  t-h\right)  +...+e^{A\left(
t-\left(  p-1\right)  h\right)  }B^{p-1}\frac{1}{\left(  p-1\right)  !}\left(
t-\left(  p-1\right)  h\right)  ^{p-1}\\
&  =e^{At}\left(  I+e^{-Ah}B\left(  t-h\right)  +...+e^{-A\left(  p-1\right)
h}B^{p-1}\frac{1}{\left(  p-1\right)  !}\left(  t-\left(  p-1\right)
h\right)  ^{p-1}\right)  =e^{At}e_{h}^{B_{1}\left(  t-h\right)  }.
\end{align*}

\end{proof}

It turns out that $X_{h,\alpha,\beta}^{A,B}\left(  t\right)  $ is a delayed
perturbation of the fundamental matrix of the equation (\ref{de1}) with $f=0.$

\begin{lemma}
$X_{h,\alpha,\alpha}^{A,B}:R\rightarrow R^{n}$ is a solution of%
\begin{equation}
^{C}D_{-h^{+}}^{\alpha}X_{h,\alpha,\alpha}^{A,B}\left(  t\right)
=AX_{h,\alpha,\alpha}^{A,B}\left(  t\right)  +BX_{h,\alpha,\alpha}%
^{A,B}\left(  t-h\right)  . \label{de4}%
\end{equation}

\end{lemma}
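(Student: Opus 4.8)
The plan is to verify (\ref{de4}) on each interval $(p-1)h<t\le ph$ by differentiating the series (\ref{ml1}) term by term and then collapsing the result with the recursion (\ref{re1}). Write the $j$-th block of $X_{h,\alpha,\alpha}^{A,B}$ as
\[
X_{j}(t):=\sum_{i=0}^{\infty}Q_{i+1}(jh)\frac{(t-jh)^{(i+1)\alpha-1}}{\Gamma\left((i+1)\alpha\right)},\qquad t>jh,
\]
and $X_{j}(t):=\Theta$ for $-h\le t\le jh$, so that $X_{h,\alpha,\alpha}^{A,B}(t)=\sum_{j=0}^{p-1}X_{j}(t)$ on $(p-1)h<t\le ph$. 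Since each block vanishes identically on $[-h,jh]$, I would first argue that ${}^{C}D_{-h^{+}}^{\alpha}X_{j}$ reduces, for $t>jh$, to the order-$\alpha$ derivative based at $jh$, to which the fractional power rule applies directly.

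Next I would apply the power rule
\[
{}^{C}D^{\alpha}\frac{(t-jh)^{(i+1)\alpha-1}}{\Gamma\left((i+1)\alpha\right)}=\frac{(t-jh)^{i\alpha-1}}{\Gamma\left(i\alpha\right)}
\]
to each summand. The crucial simplification is that the leading ($i=0$) singular term is annihilated, because $1/\Gamma(0)=0$; this is exactly the fractional counterpart of a constant being killed by an ordinary derivative. Hence, after re-indexing $i\mapsto i+1$,
\[
{}^{C}D_{-h^{+}}^{\alpha}X_{j}(t)=\sum_{i=0}^{\infty}Q_{i+2}(jh)\frac{(t-jh)^{(i+1)\alpha-1}}{\Gamma\left((i+1)\alpha\right)} .
\]
Now I would invoke (\ref{re1}) in the form $Q_{i+2}(jh)=AQ_{i+1}(jh)+BQ_{i+1}\left((j-1)h\right)$, split the sum accordingly, and recognize the two pieces: the first is $AX_{j}(t)$, while the second, after writing $t-jh=(t-h)-(j-1)h$, is $BX_{j-1}(t-h)$. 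Thus ${}^{C}D_{-h^{+}}^{\alpha}X_{j}(t)=AX_{j}(t)+BX_{j-1}(t-h)$.

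Summing over $j=0,\dots,p-1$ then finishes the argument. The first terms give $A\sum_{j=0}^{p-1}X_{j}(t)=AX_{h,\alpha,\alpha}^{A,B}(t)$. For the second terms, the block $X_{-1}$ vanishes because $Q_{i+1}(-h)=\Theta$, so $\sum_{j=0}^{p-1}X_{j-1}(t-h)=\sum_{j=0}^{p-2}X_{j}(t-h)$; since $t\in\big((p-1)h,ph\big]$ forces $t-h\in\big((p-2)h,(p-1)h\big]$, this last sum is precisely $X_{h,\alpha,\alpha}^{A,B}(t-h)$. Combining the two pieces yields (\ref{de4}), and the degenerate first interval ($p=1$, where $X(t-h)=\Theta$) is covered by the empty second sum.

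The main obstacle is not the algebra but the analytic justification of the two interchanges. First, differentiating (\ref{ml1}) term by term requires controlling the series uniformly on compact subintervals; I would use the estimate $\left\Vert Q_{i+1}(jh)\right\Vert \le(\left\Vert A\right\Vert +\left\Vert B\right\Vert )^{i}$, read off inductively from (\ref{re1}), together with the entire growth of the Mittag-Leffler kernel, to justify commuting ${}^{C}D_{-h^{+}}^{\alpha}$ with $\sum_{i}$. Second, and more delicate, is assigning a rigorous meaning to the derivative of the leading singular term $(t-jh)^{\alpha-1}$: since this term is not absolutely continuous at $jh$, both the reduction of the lower terminal from $-h$ to $jh$ and the identity $1/\Gamma(0)=0$ must be read through the Riemann--Liouville fractional integral $I_{-h^{+}}^{1-\alpha}$ rather than the naive Caputo integrand, using precisely that block $j$ is identically $\Theta$ on $[-h,jh]$.
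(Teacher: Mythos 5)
Your proposal is correct and takes essentially the same route as the paper's proof: term-by-term application of the fractional power rule with the leading $(t-jh)^{\alpha-1}$ term annihilated via $1/\Gamma\left(0\right)=0$, the recursion (\ref{re1}) to collapse the shifted coefficients, and reindexing in $i$ and $j$ to identify the two pieces as $AX_{h,\alpha,\alpha}^{A,B}\left(t\right)$ and $BX_{h,\alpha,\alpha}^{A,B}\left(t-h\right)$. The paper merely packages this identical computation as an induction over the intervals $\left(ph,\left(p+1\right)h\right]$ rather than your direct block-by-block summation, and it is silent on the analytic justifications (uniform convergence and the Riemann--Liouville reading of the singular term) that you rightly flag as the delicate points.
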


\begin{proof}
We verify that $X_{h,\alpha,\alpha}^{A,B}\left(  t\right)  $ satisfies
differential equation (\ref{de4}) for $t\in\left(  t_{p},t_{p+1}\right]  .$ We
adopt mathematical induction to prove our result.

(i) For $p=0$, $0<t\leq h$, we have%
\begin{align*}
X_{h,\alpha,\alpha}^{A,B}\left(  t\right)   &  =t^{\alpha-1}E_{\alpha,\alpha
}\left(  At\right)  ,\ \ X_{h,\alpha,\alpha}^{A,B}\left(  t-h\right)
=\Theta,\\
^{C}D_{-h^{+}}^{\alpha}X_{h,\alpha,\alpha}^{A,B}\left(  t\right)   &
=\ ^{C}D_{-h^{+}}^{\alpha}%
{\displaystyle\sum\limits_{i=1}^{\infty}}
A^{i}\frac{t^{\left(  i+1\right)  \alpha-1}}{\Gamma\left(  \left(  i+1\right)
\alpha\right)  }=AX_{h,\alpha,\alpha}^{A,B}\left(  t\right)  =AX_{h,\alpha
,\alpha}^{A,B}\left(  t\right)  +BX_{h,\alpha,\alpha}^{A,B}\left(  t-h\right)
.
\end{align*}

(ii) Suppose $p=n,\ \left(  n-1\right)  h<t\leq nh$ the following relation
holds:%
\[
X_{h,\alpha,\alpha}^{A,B}\left(  t\right)  =%
{\displaystyle\sum\limits_{i=0}^{\infty}}
{\displaystyle\sum\limits_{j=0}^{n-1}}
Q_{i+1}\left(  jh\right)  \frac{\left(  t-jh\right)  ^{\left(  i+1\right)
\alpha-1}}{\Gamma\left(  \left(  i+1\right)  \alpha\right)  }.
\]
Next, for $p=n+1,\ nh<t\leq\left(  n+1\right)  h$, by elementary computation,
one obtains%
\begin{align*}
^{C}D_{-h^{+}}^{\alpha}X_{h,\alpha,\alpha}^{A,B}\left(  t\right)   &  =%
{\displaystyle\sum\limits_{i=0}^{\infty}}
{\displaystyle\sum\limits_{j=0}^{n}}
Q_{i+1}\left(  jh\right)  \frac{\Gamma\left(  \left(  i+1\right)
\alpha\right)  }{\Gamma\left(  i\alpha\right)  }\frac{\left(  t-jh\right)
^{i\alpha-1}}{\Gamma\left(  \left(  i+1\right)  \alpha\right)  }\\
&  =%
{\displaystyle\sum\limits_{i=0}^{\infty}}
{\displaystyle\sum\limits_{j=0}^{n}}
Q_{i+1}\left(  jh\right)  \frac{\left(  t-jh\right)  ^{i\alpha-1}}%
{\Gamma\left(  i\alpha\right)  }=%
{\displaystyle\sum\limits_{i=0}^{\infty}}
{\displaystyle\sum\limits_{j=0}^{n}}
\left(  AQ_{i}\left(  jh\right)  +BQ_{i}\left(  jh-h\right)  \right)
\frac{\left(  t-jh\right)  ^{i\alpha-1}}{\Gamma\left(  i\alpha\right)  }\\
&  =%
{\displaystyle\sum\limits_{i=0}^{\infty}}
{\displaystyle\sum\limits_{j=0}^{n}}
AQ_{i+1}\left(  jh\right)  \frac{\left(  t-jh\right)  ^{\left(  i+1\right)
\alpha-1}}{\Gamma\left(  \left(  i+1\right)  \alpha\right)  }+%
{\displaystyle\sum\limits_{i=0}^{\infty}}
{\displaystyle\sum\limits_{j=0}^{n-1}}
BQ_{i+1}\left(  jh\right)  \frac{\left(  t-h-jh\right)  ^{\left(  i+1\right)
\alpha-1}}{\Gamma\left(  \left(  i+1\right)  \alpha\right)  }\\
&  =AX_{h,\alpha,\alpha}^{A,B}\left(  t\right)  +BX_{h,\alpha,\alpha}%
^{A,B}\left(  t-h\right)  .
\end{align*}
This ends the proof.
\end{proof}

\begin{lemma}
\label{lem:11}Let $\left(  k-1\right)  h<t\leq kh,$ $-h\leq s\leq t$. We have%
\[
\int_{s}^{t}\left(  t-r\right)  ^{-\alpha}X_{h,\alpha,\alpha}^{A,B}\left(
r-s\right)  dr=\left(  t-s-jh\right)  ^{-\alpha+i\alpha+\beta}%
{\displaystyle\sum\limits_{i=0}^{\infty}}
{\displaystyle\sum\limits_{j=0}^{p-1}}
Q_{i+1}\left(  jh\right)  \frac{\Gamma\left(  1-\alpha\right)  }{\Gamma\left(
i\alpha+\beta+1-\alpha\right)  }.
\]

\end{lemma}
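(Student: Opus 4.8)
The plan is to insert the series representation of $X_{h,\alpha,\beta}^{A,B}$ from Definition~\ref{def:21} into the integral, exchange the (absolutely convergent) double series with the integral, and reduce each resulting term to Euler's Beta integral; the factor $\Gamma(i\alpha+\beta)$ coming from the denominator in the definition of $X$ will then cancel against the one produced by the Beta integral, leaving exactly the asserted summand. I carry a general second index $\beta$ throughout, since the right-hand side is stated in terms of $\beta$; specialising to $\beta=\alpha$ recovers the stated $X_{h,\alpha,\alpha}^{A,B}$.

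For $r\in(s,t]$ I first locate $r-s$ in its delay interval: if $(p-1)h<r-s\le ph$, then by \eqref{ml1}
\[
X_{h,\alpha,\beta}^{A,B}(r-s)=\sum_{i=0}^{\infty}\sum_{j=0}^{p-1}Q_{i+1}(jh)\,\frac{(r-s-jh)^{i\alpha+\beta-1}}{\Gamma(i\alpha+\beta)} .
\]
The key structural observation is that, for fixed $j$, the block of terms carrying $Q_{i+1}(jh)$ enters the sum precisely when $r-s>jh$, i.e. when $r>s+jh$; thus each such block is ``switched on'' only on $(s+jh,t]$. Interchanging summation and integration (legitimate by Fubini, as the matrix series converges absolutely and uniformly on the compact interval $[s,t]$, being bounded there by a scalar Mittag-Leffler series, while $(t-r)^{-\alpha}$ is integrable for $\alpha\in(0,1)$), I obtain
\[
\int_{s}^{t}(t-r)^{-\alpha}X_{h,\alpha,\beta}^{A,B}(r-s)\,dr=\sum_{i=0}^{\infty}\sum_{j=0}^{p-1}\frac{Q_{i+1}(jh)}{\Gamma(i\alpha+\beta)}\int_{s+jh}^{t}(t-r)^{-\alpha}(r-s-jh)^{i\alpha+\beta-1}\,dr .
\]

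The inner integral is then evaluated by the substitution $r=s+jh+u\,(t-s-jh)$, $u\in[0,1]$, which turns it into Euler's Beta integral and yields
\[
\int_{s+jh}^{t}(t-r)^{-\alpha}(r-s-jh)^{i\alpha+\beta-1}\,dr=(t-s-jh)^{-\alpha+i\alpha+\beta}\,\frac{\Gamma(1-\alpha)\,\Gamma(i\alpha+\beta)}{\Gamma(i\alpha+\beta+1-\alpha)} .
\]
Substituting back, the $\Gamma(i\alpha+\beta)$ factors cancel, and summing over $i$ and $j$ produces precisely the claimed identity.

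The analysis here is routine; the genuine care is bookkeeping. One must keep the piecewise definition of $X$ straight so that the $j$-th block is integrated from its activation point $s+jh$ rather than from $s$, and one must check the admissibility conditions $\alpha<1$ and $i\alpha+\beta>0$ that secure both the integrability of $(t-r)^{-\alpha}$ and the convergence of the Beta integral. I would also flag that, as displayed in the statement, the power $(t-s-jh)^{-\alpha+i\alpha+\beta}$ must be read inside the double summation, since it depends on the summation indices $i,j$; this is the only point at which the printed formula is ambiguous, and it is resolved automatically by the term-by-term computation above.
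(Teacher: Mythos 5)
Your proof is correct, but note that there is nothing in the paper to compare it against: Lemma~\ref{lem:11} is stated without any proof, so your argument fills a genuine omission rather than paralleling an existing one. The route you take --- insert the series \eqref{ml1}, observe that the $j$-th block of terms is active only for $r>s+jh$, interchange sum and integral, and reduce each term to Euler's Beta integral so that $\Gamma(i\alpha+\beta)$ cancels --- is the natural one, and it is exactly the computation the author performs \emph{inline} (without isolating it) in the proof of Theorem~\ref{thm:1}, where the identity is invoked with $\beta=\alpha$ to produce the factors $(t-s-jh)^{i\alpha}\,\Gamma(1-\alpha)/\Gamma(i\alpha+1)$. Your two editorial observations are also on target and worth keeping: the factor $(t-s-jh)^{-\alpha+i\alpha+\beta}$ must be read inside the double sum, and the statement mixes $X_{h,\alpha,\alpha}^{A,B}$ on the left with a general $\beta$ on the right, a mismatch you resolve correctly by proving the identity for $X_{h,\alpha,\beta}^{A,B}$ and specialising.

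Two small points of hygiene. First, your Fubini justification says the matrix series is \emph{bounded} on $[s,t]$; it is not, since the $i=0$ terms behave like $(r-s-jh)^{\beta-1}$ near $r=s+jh$ when $\beta<1$ (the relevant case $\beta=\alpha$ included). The interchange is still legitimate --- apply Tonelli to the normed terms, each of which is integrable because $\alpha\in(0,1)$ and $i\alpha+\beta>0$, with the singular points $r=s+jh$ and $r=t$ distinct since $t-s>jh$ for all $j\le p-1$ --- but the word ``bounded'' should be replaced by ``dominated by an integrable function.'' Second, it would be worth stating explicitly how $p$ on the right-hand side is determined, namely $(p-1)h<t-s\le ph$; the paper's hypothesis is phrased in terms of $k$ and $t$ alone, and your activation-point argument is precisely what shows the sum terminates at $j=p-1$ with $p$ tied to $t-s$ rather than to $t$.
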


\begin{theorem}
\label{thm:1}The solution $y(t)$ of (\ref{de1}) satisfying zero initial
condition, has a form%
\[
y\left(  t\right)  =\int_{-h}^{t}X_{h,\alpha,\alpha}^{A,B}\left(  t-s\right)
f\left(  s\right)  ds,\ \ t\geq0.
\]

\end{theorem}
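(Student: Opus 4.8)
The plan is to verify directly that the function $y(t)=\int_{-h}^{t}X_{h,\alpha,\alpha}^{A,B}(t-s)f(s)\,ds$ satisfies the fractional delay equation (\ref{de1}) on $(0,T]$. Because we impose the zero initial condition $y(-h)=0$, the Caputo operator $^{C}D_{-h^{+}}^{\alpha}$ agrees with $\frac{d}{dt}I_{-h^{+}}^{1-\alpha}$, where $I_{-h^{+}}^{\gamma}$ denotes the Riemann--Liouville fractional integral of order $\gamma$ based at $-h$; so it suffices to show $\frac{d}{dt}I_{-h^{+}}^{1-\alpha}y(t)=Ay(t)+By(t-h)+f(t)$. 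Equivalently, one may apply $I_{-h^{+}}^{\alpha}$ to the equation and check the Volterra identity $y=I_{-h^{+}}^{\alpha}[Ay+By(\cdot-h)+f]$; both routes need the same computation.

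First I would insert the convolution form of $y$ and interchange the order of integration (Fubini), writing
\[
I_{-h^{+}}^{1-\alpha}y(t)=\frac{1}{\Gamma(1-\alpha)}\int_{-h}^{t}f(s)\left(\int_{s}^{t}(t-r)^{-\alpha}X_{h,\alpha,\alpha}^{A,B}(r-s)\,dr\right)ds.
\]
The inner integral is exactly the object evaluated in Lemma \ref{lem:11}; for $\beta=\alpha$ it collapses to $\Gamma(1-\alpha)\sum_{i}\sum_{j}Q_{i+1}(jh)\frac{(t-s-jh)^{i\alpha}}{\Gamma(i\alpha+1)}$, which eliminates the fractional kernel and leaves only integer powers of $(t-s-jh)$ under the $s$-integral.

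Next I would differentiate in $t$ by the Leibniz rule. The term $i=0,\,j=0$ has $Q_{1}(0)=I$; since only the index $j=0$ is active as $s\to t^{-}$, the boundary value of this term at $s=t$ produces exactly the inhomogeneous part $f(t)$ (every other term vanishes there because $i\alpha>0$ or because $Q_{1}(jh)=\Theta$ for $j\ge1$). For the surviving terms with $i\ge1$, differentiation lowers the exponent to $(t-s-jh)^{i\alpha-1}/\Gamma(i\alpha)$; after reindexing $i\mapsto i+1$ and invoking the recurrence $Q_{i+2}(jh)=AQ_{i+1}(jh)+BQ_{i+1}((j-1)h)$ from (\ref{re1}), the sum splits into an $A$-part that reassembles $X_{h,\alpha,\alpha}^{A,B}(t-s)$ and, after the shift $j\mapsto j+1$, a $B$-part that reassembles $X_{h,\alpha,\alpha}^{A,B}(t-h-s)$. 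Using the support property $X_{h,\alpha,\alpha}^{A,B}(\tau)=\Theta$ for $\tau<0$ to shorten the delayed integral's upper limit from $t$ to $t-h$, these two pieces are precisely $Ay(t)$ and $By(t-h)$, which together with $f(t)$ give (\ref{de1}). (Alternatively one could argue more structurally from the homogeneous identity (\ref{de4}) via a Duhamel/superposition principle, but the term-by-term route is the most direct given Lemma \ref{lem:11}.)

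The main obstacle is analytic rather than algebraic: $X_{h,\alpha,\alpha}^{A,B}$ carries an integrable singularity $\sim t^{\alpha-1}/\Gamma(\alpha)$ at $t=0$ and is only piecewise smooth across the breakpoints $t=jh$, so the differentiation under the integral sign and the Fubini interchange must be justified carefully---indeed it is exactly this singularity that renders the $s=t$ boundary contribution finite and equal to $f(t)$. The second delicate point is the bookkeeping of the double series: one must track which values of $j$ are active on each interval $(p-1)h<t\le ph$ and confirm that the reindexing in $i$ together with the shift in $j$ is compatible with the vanishing of $Q_{i+1}(jh)$ outside its admissible range, so that the recurrence (\ref{re1}) closes the computation and no spurious boundary terms remain.
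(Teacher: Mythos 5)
Your proposal is correct and follows essentially the same route as the paper: the same Fubini interchange, the same use of Lemma \ref{lem:11} to collapse the fractional kernel, the same boundary term producing $f(t)$, and the same recurrence (\ref{re1}) used to reassemble $Ay(t)+By(t-h)$ from the reindexed double sum. The only (cosmetic) difference is the direction of the logic: the paper phrases the computation as a variation-of-constants ansatz $y(t)=\int_{-h}^{t}X_{h,\alpha,\alpha}^{A,B}(t-s)c(s)\,ds$ with unknown density $c$ and concludes $c=f$ by matching both sides, whereas you substitute $c=f$ at the outset and verify directly.
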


\begin{proof}
By using the method of variation of constants, any solution of nonhomogeneous
system $y\left(  t\right)  $ should be satisfied in the form%
\begin{equation}
y\left(  t\right)  =\int_{-h}^{t}X_{h,\alpha,\alpha}^{A,B}\left(  t-s\right)
c\left(  s\right)  ds,\ \ t\geq0, \label{rp1}%
\end{equation}
where $c\left(  s\right)  ,$ $0\leq s\leq t$ is an unknown vector function and
$y(0)=0$. Having Caputo fractional differentiation on both sides of
(\ref{rp1}) , we obtain the following cases:

(i) For $0<t\leq h$ we have%
\begin{align*}
\left(  ^{C}D_{-h^{+}}^{\alpha}y\right)  \left(  t\right)   &  =Ay\left(
t\right)  +By\left(  t-h\right)  +f\left(  t\right) \\
&  =A\int_{-h}^{t}X_{h,\alpha,\alpha}^{A,B}\left(  t-s\right)  c\left(
s\right)  ds+\int_{-h}^{t-h}X_{h,\alpha,\alpha}^{A,B}\left(  t-h-s\right)
c\left(  s\right)  ds+f\left(  t\right) \\
&  =A\int_{-h}^{t}X_{h,\alpha,\alpha}^{A,B}\left(  t-s\right)  c\left(
s\right)  ds+f\left(  t\right)  .
\end{align*}
According to Lemma \ref{lem:11}, we have%
\begin{align*}
\left(  ^{C}D_{-h^{+}}^{\alpha}y\right)  \left(  t\right)   &  =\frac
{1}{\Gamma\left(  1-\alpha\right)  }\frac{d}{dt}\int_{-h}^{t}\left(
t-r\right)  ^{-\alpha}\left(  \int_{-h}^{r}X_{h,\alpha,\alpha}^{A,B}\left(
r-s\right)  c\left(  s\right)  ds\right)  dr\\
&  =\frac{1}{\Gamma\left(  1-\alpha\right)  }\frac{d}{dt}\int_{-h}^{t}c\left(
s\right)  \int_{s}^{t}\left(  t-r\right)  ^{-\alpha}%
{\displaystyle\sum\limits_{i=0}^{\infty}}
A^{i}\frac{\left(  r-s\right)  ^{\left(  i+1\right)  \alpha-1}}{\Gamma\left(
\left(  i+1\right)  \alpha\right)  }drds\\
&  =\frac{1}{\Gamma\left(  1-\alpha\right)  }\frac{d}{dt}\int_{-h}^{t}c\left(
s\right)  \int_{s}^{t}\left(  t-r\right)  ^{-\alpha}\frac{\left(  r-s\right)
^{\alpha-1}}{\Gamma\left(  \alpha\right)  }drds\\
&  +\frac{1}{\Gamma\left(  1-\alpha\right)  }%
{\displaystyle\sum\limits_{i=1}^{\infty}}
A^{i}\frac{d}{dt}\int_{-h}^{t}c\left(  s\right)  \int_{s}^{t}\left(
t-r\right)  ^{-\alpha}\frac{\left(  r-s\right)  ^{\left(  i+1\right)
\alpha-1}}{\Gamma\left(  \left(  i+1\right)  \alpha\right)  }drds
\end{align*}%
\begin{align*}
&  =c\left(  t\right)  +%
{\displaystyle\sum\limits_{i=1}^{\infty}}
\frac{1}{\Gamma\left(  1-\alpha\right)  \Gamma\left(  \left(  i+1\right)
\alpha\right)  }A^{i}\frac{d}{dt}\int_{-h}^{t}c\left(  s\right)  \left(
t-s\right)  ^{i\alpha-1}B\left(  \alpha\left(  i+1\right)  ,1-\alpha\right)
ds\\
&  =c\left(  t\right)  +%
{\displaystyle\sum\limits_{i=1}^{\infty}}
\frac{\Gamma\left(  1-\alpha\right)  \Gamma\left(  \left(  i+1\right)
\alpha\right)  }{\Gamma\left(  1-\alpha\right)  \Gamma\left(  \left(
i+1\right)  \alpha\right)  \Gamma\left(  1+i\alpha\right)  }A^{i}\frac{d}%
{dt}\int_{-h}^{t}c\left(  s\right)  \left(  t-s\right)  ^{i\alpha}ds\\
&  =c\left(  t\right)  +%
{\displaystyle\sum\limits_{i=1}^{\infty}}
A^{i}\frac{1}{\Gamma\left(  1+i\alpha\right)  }\frac{d}{dt}\int_{-h}%
^{t}c\left(  s\right)  \left(  t-s\right)  ^{i\alpha}ds
\end{align*}%
\begin{align*}
&  =c\left(  t\right)  +%
{\displaystyle\sum\limits_{i=1}^{\infty}}
A^{i}\frac{\alpha i}{\Gamma\left(  1+i\alpha\right)  }\int_{-h}^{t}c\left(
s\right)  \left(  t-s\right)  ^{i\alpha-1}ds=c\left(  t\right)  +\int_{-h}^{t}%
{\displaystyle\sum\limits_{i=1}^{\infty}}
A^{i}\frac{1}{\Gamma\left(  i\alpha\right)  }\left(  t-s\right)  ^{i\alpha
-1}c\left(  s\right)  ds\\
&  =c\left(  t\right)  +A\int_{-h}^{t}%
{\displaystyle\sum\limits_{i=0}^{\infty}}
A^{i}\frac{\left(  t-s\right)  ^{\left(  i+1\right)  \alpha-1}}{\Gamma\left(
\left(  i+1\right)  \alpha\right)  }c\left(  s\right)  ds=c\left(  t\right)
+A\int_{-h}^{t}X_{h,\alpha,\beta}^{A,B}\left(  t-s\right)  c\left(  s\right)
ds.
\end{align*}
Hence, we obtain $c(t)=f(t)$.

(ii) For $kh<t\leq\left(  k+1\right)  h$, according to (\ref{de1}) , we have%
\begin{align*}
\left(  ^{C}D_{-h^{+}}^{\alpha}y\right)  \left(  t\right)   &  =Ay\left(
t\right)  +By\left(  t-h\right)  +f\left(  t\right) \\
&  =A\int_{-h}^{t}X_{h,\alpha,\alpha}^{A,B}\left(  t-s\right)  c\left(
s\right)  ds+B\int_{-h}^{t-h}X_{h,\alpha,\alpha}^{A,B}\left(  t-h-s\right)
c\left(  s\right)  ds+f\left(  t\right) \\
&  =A\int_{-h}^{t}%
{\displaystyle\sum\limits_{i=0}^{\infty}}
{\displaystyle\sum\limits_{j=0}^{n}}
Q_{i+1}\left(  jh\right)  \frac{\left(  t-s-jh\right)  ^{\left(  i+1\right)
\alpha-1}}{\Gamma\left(  \left(  i+1\right)  \alpha\right)  }c\left(
s\right)  ds\\
&  +B\int_{-h}^{t-h}%
{\displaystyle\sum\limits_{i=0}^{\infty}}
{\displaystyle\sum\limits_{j=0}^{n-1}}
Q_{i+1}\left(  jh\right)  \frac{\left(  t-s-h-jh\right)  ^{\left(  i+1\right)
\alpha-1}}{\Gamma\left(  \left(  i+1\right)  \alpha\right)  }c\left(
s\right)  ds+f\left(  t\right) \\
&  =A%
{\displaystyle\sum\limits_{i=0}^{\infty}}
{\displaystyle\sum\limits_{j=0}^{n}}
Q_{i+1}\left(  jh\right)  \int_{-h}^{t-jh}\frac{\left(  t-s-jh\right)
^{\left(  i+1\right)  \alpha-1}}{\Gamma\left(  \left(  i+1\right)
\alpha\right)  }c\left(  s\right)  ds\\
&  +B%
{\displaystyle\sum\limits_{i=0}^{\infty}}
{\displaystyle\sum\limits_{j=1}^{n}}
Q_{i+1}\left(  jh-h\right)  \int_{-h}^{t-jh}\frac{\left(  t-s-jh\right)
^{\left(  i+1\right)  \alpha-1}}{\Gamma\left(  \left(  i+1\right)
\alpha\right)  }c\left(  s\right)  ds+f\left(  t\right) \\
&  =%
{\displaystyle\sum\limits_{i=0}^{\infty}}
{\displaystyle\sum\limits_{j=0}^{n}}
Q_{i+2}\left(  jh\right)  \int_{-h}^{t-jh}\frac{\left(  t-s-jh\right)
^{\left(  i+1\right)  \alpha-1}}{\Gamma\left(  \left(  i+1\right)
\alpha\right)  }c\left(  s\right)  ds+f\left(  t\right)  .
\end{align*}
According to Lemma \ref{lem:11}, we have%
\begin{align*}
&  \left(  ^{C}D_{-h^{+}}^{\alpha}y\right)  \left(  t\right) \\
&  =\frac{1}{\Gamma\left(  1-\alpha\right)  }\frac{d}{dt}\int_{-h}^{t}\left(
t-r\right)  ^{-\alpha}\left(  \int_{-h}^{r}X_{h,\alpha,\alpha}^{A,B}\left(
r-s\right)  c\left(  s\right)  ds\right)  dr\\
&  =\frac{1}{\Gamma\left(  1-\alpha\right)  }\frac{d}{dt}\int_{-h}^{t}c\left(
s\right)  \int_{s}^{t}\left(  t-r\right)  ^{-\alpha}X_{h,\alpha,\alpha}%
^{A,B}\left(  r-s\right)  drds\\
&  =\frac{1}{\Gamma\left(  1-\alpha\right)  }%
{\displaystyle\sum\limits_{i=0}^{\infty}}
{\displaystyle\sum\limits_{j=0}^{n}}
Q_{i+1}\left(  jh\right)  \frac{d}{dt}\int_{-h}^{t}c\left(  s\right)  \int
_{s}^{t}\left(  t-r\right)  ^{-\alpha}\frac{\left(  r-s-jh\right)  ^{\left(
i+1\right)  \alpha-1}}{\Gamma\left(  \left(  i+1\right)  \alpha\right)  }drds
\end{align*}%
\begin{align*}
&  =\frac{1}{\Gamma\left(  1-\alpha\right)  }%
{\displaystyle\sum\limits_{i=0}^{\infty}}
{\displaystyle\sum\limits_{j=0}^{n}}
Q_{i+1}\left(  jh\right)  \frac{d}{dt}\int_{-h}^{t-jh}c\left(  s\right)
\left(  t-s-jh\right)  ^{i\alpha}\frac{\Gamma\left(  1-\alpha\right)  }%
{\Gamma\left(  i\alpha+1\right)  }ds\\
&  =%
{\displaystyle\sum\limits_{j=0}^{n}}
Q_{1}\left(  jh\right)  \frac{d}{dt}\int_{-h}^{t-jh}c\left(  s\right)  ds+%
{\displaystyle\sum\limits_{i=1}^{\infty}}
{\displaystyle\sum\limits_{j=0}^{n}}
Q_{i+1}\left(  jh\right)  \int_{-h}^{t-jh}c\left(  s\right)  \left(
t-s-jh\right)  ^{i\alpha-1}\frac{1}{\Gamma\left(  i\alpha\right)  }ds\\
&  =c\left(  t\right)  +%
{\displaystyle\sum\limits_{i=0}^{\infty}}
{\displaystyle\sum\limits_{j=0}^{n}}
Q_{i+2}\left(  jh\right)  \int_{-h}^{t-jh}\frac{\left(  t-s-jh\right)
^{\left(  i+1\right)  \alpha-1}}{\Gamma\left(  \left(  i+1\right)
\alpha\right)  }c\left(  s\right)  ds.
\end{align*}

Hence, we obtain $c(t)=f(t)$. The proof is completed.
\end{proof}

\begin{theorem}
\label{thm:2}Let $p=0,1,...,l$. A solution $y\in C\left(  \left(  \left(
p-1\right)  h,ph\right]  ,R^{n}\right)  $ of (\ref{de1}) has a form%
\[
y\left(  t\right)  =X_{h,\alpha,1}^{A,B}\left(  t+h\right)  \varphi\left(
0\right)  +\int_{-h}^{0}X_{h,\alpha,\alpha}^{A,B}\left(  t-s\right)  \left(
\left(  ^{C}D_{-h^{+}}^{\alpha}\varphi\right)  \left(  s\right)
-A\varphi\left(  s\right)  \right)  ds.
\]

\end{theorem}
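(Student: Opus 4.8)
The plan is to use the linearity of (\ref{de1}) to reduce the assertion to the zero--initial--condition case of Theorem \ref{thm:1} together with a separate ``fundamental matrix'' contribution. I would write the candidate as $y=y_{1}+y_{2}$ with $y_{1}(t)=X_{h,\alpha,1}^{A,B}(t+h)\varphi(0)$ and $y_{2}(t)=\int_{-h}^{0}X_{h,\alpha,\alpha}^{A,B}(t-s)\,g(s)\,ds$, where $g(s):=({}^{C}D_{-h^{+}}^{\alpha}\varphi)(s)-A\varphi(s)$. The guiding observation is that, because the Caputo operator carries the lower terminal $-h$ and $y\equiv\varphi$ on $[-h,0]$, the homogeneous delay equation on $(0,T]$ is equivalent to a \emph{forced} equation on all of $(-h,T]$: on $(-h,0)$ one has $y(t-h)=\Theta$ by the convention $X(u)=\Theta$ for $u<0$, so (\ref{de1}) with $f=0$ collapses to the very definition of $g$, while on $(0,T]$ the source disappears and one recovers (\ref{de4}). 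Thus $g$, extended by zero to $(0,T]$, is exactly the source that the history $\varphi$ feeds into the system (the contribution of a nonzero $f$ is then recovered by superposition, again via Theorem \ref{thm:1}).

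With this reformulation, Theorem \ref{thm:1} applied to the source $g$ (extended by zero) delivers the history term $y_{2}$ verbatim, since for $t\ge 0$ one has $\int_{-h}^{t}X_{h,\alpha,\alpha}^{A,B}(t-s)g(s)\,ds=\int_{-h}^{0}X_{h,\alpha,\alpha}^{A,B}(t-s)g(s)\,ds$. I would then recompute ${}^{C}D_{-h^{+}}^{\alpha}y_{2}$ by the same route as in the proof of Theorem \ref{thm:1}: interchange the Caputo operator with the $s$--integration, evaluate the inner integral $\int_{s}^{t}(t-r)^{-\alpha}X_{h,\alpha,\alpha}^{A,B}(r-s)\,dr$ by Lemma \ref{lem:11}, and telescope the coefficients through the recurrence (\ref{re1}); this shows that $y_{2}$ satisfies the forced analogue of (\ref{de4}) with source $g$ supported on $(-h,0]$.

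The main step, which I expect to be the real obstacle, is the first term. One must establish that $t\mapsto X_{h,\alpha,1}^{A,B}(t+h)$ is the fundamental matrix: it solves the homogeneous equation ${}^{C}D_{-h^{+}}^{\alpha}u(t)=Au(t)+Bu(t-h)$ on $(0,T]$ and equals the identity at the terminal, $X_{h,\alpha,1}^{A,B}(0)=I$ by (\ref{ml1}). This is the $\beta=1$ analogue of the lemma yielding (\ref{de4}), but it is not contained in that lemma and must be proved separately. I would re-run its block induction on $((p-1)h,ph]$, differentiating the series (\ref{ml1}) term by term; the Caputo derivative lowers each power by $\alpha$ and produces the Gamma--quotient $\Gamma(i\alpha+1)/\Gamma(i\alpha+1-\alpha)$, after which the recurrence (\ref{re1}), $Q_{i+1}(jh)=AQ_{i}(jh)+BQ_{i}(jh-h)$, reassembles the undelayed and the delayed series, and the junction values at the nodes $t=ph$ have to be checked to match. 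The delicate bookkeeping is precisely the constant ($i=0$) term $I$ and its interaction with the $\beta=1$ Gamma factors, which is where the value the fundamental matrix carries at the terminal gets pinned down.

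Finally I would assemble $y=y_{1}+y_{2}$ and verify the initial condition $y\equiv\varphi$ on $[-h,0]$ as well as the matching at $t=0$. On $[-h,0]$ the delayed blocks of both $X_{h,\alpha,1}^{A,B}(t+h)$ and $X_{h,\alpha,\alpha}^{A,B}(t-s)$ vanish, so these collapse to the ordinary two--parameter matrices $E_{\alpha,1}(A(t+h)^{\alpha})$ and $(t-s)^{\alpha-1}E_{\alpha,\alpha}(A(t-s)^{\alpha})$, and the identity $y(t)=\varphi(t)$ reduces to the classical variation--of--constants formula for the fractional ordinary equation ${}^{C}D_{-h^{+}}^{\alpha}\varphi=A\varphi+g$ on $[-h,0]$. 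Matching this reduced identity against $y_{1}$ is the consistency check that fixes the datum multiplying the fundamental matrix; once that is settled, verifying (\ref{de1}) on each block $((p-1)h,ph]$ follows by the same induction as for (\ref{de4}), completing the argument.
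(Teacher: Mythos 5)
Your strategy is sound, but it runs in the opposite direction from the paper's, and in one respect it is more complete. The paper's proof is a necessity argument: it posits the ansatz $y(t)=X_{h,\alpha,1}^{A,B}(t+h)\,c+\int_{-h}^{0}X_{h,\alpha,\alpha}^{A,B}(t-s)\,g(s)\,ds$ with \emph{unknown} $c$ and $g$, imposes $y\equiv\varphi$ on $[-h,0]$, and determines the unknowns from that constraint alone: evaluating at $t=-h$ (where the first kernel is $I$ and the integral term vanishes) gives $c=\varphi(-h)$, and on $[-h,0]$, where both kernels collapse to the classical Mittag-Leffler matrices, applying ${}^{C}D_{-h^{+}}^{\alpha}$ to the resulting identity yields $({}^{C}D_{-h^{+}}^{\alpha}\varphi)(t)=A\varphi(t)+g(t)$, hence $g={}^{C}D_{-h^{+}}^{\alpha}\varphi-A\varphi$. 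The paper never verifies that the formula so obtained actually solves (\ref{de1}) for $t>0$; in particular it nowhere proves that $t\mapsto X_{h,\alpha,1}^{A,B}(t+h)$ satisfies the homogeneous equation, since its lemma covers only the case $\beta=\alpha$, i.e. (\ref{de4}). Your proposal is the sufficiency direction: you verify each piece separately, invoking Theorem \ref{thm:1} with the source $g$ extended by zero for the integral term (harmless, though note the extension need not be continuous at $0$), and supplying the missing $\beta=1$ analogue of the lemma for the fundamental-matrix term. You correctly identify that lemma as the real extra work, and your sketch of it (term-by-term Caputo differentiation of (\ref{ml1}), the quotient $\Gamma(i\alpha+1)/\Gamma(i\alpha+1-\alpha)$, reassembly through the recurrence (\ref{re1}), with the constant $i=0$ term annihilated by the Caputo derivative) is the right computation. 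What each route buys: the paper's is shorter and produces $c$ and $g$ automatically, but it is formally incomplete as a proof of the representation; yours is longer but, combined with a uniqueness statement for (\ref{de1}) — which you should invoke explicitly, both to conclude $y_{1}+y_{2}=\varphi$ on $[-h,0]$ from the shared fractional ODE and to conclude that the verified formula is \emph{the} solution — it closes that gap.

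One concrete warning: when you carry out your consistency check at $t=-h$, the datum multiplying the fundamental matrix will be forced to equal $\varphi(-h)$, not $\varphi(0)$ as in the statement you were given. The $\varphi(0)$ in the theorem is a misprint: the paper's own proof derives $c=\varphi(-h)$, and its Corollary states the formula with $\varphi(-h)$. So your hedge that the matching step ``fixes the datum'' is exactly where this surfaces, and the formula you should end up proving is the one with $\varphi(-h)$.
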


\begin{proof}
We looking for a solution of the form%
\[
y\left(  t\right)  =X_{h,\alpha,1}^{A,B}\left(  t+h\right)  c+\int_{-h}%
^{0}X_{h,\alpha,\alpha}^{A,B}\left(  t-s\right)  g\left(  s\right)  ds,
\]
where $c$ is an unknown constants, $g(t)$ is an unknown continuously
differentiable function. Moreover, it satisfies initial condition $y\left(
t\right)  =\varphi\left(  t\right)  $, $-h\leq t\leq0$, i.e.,%
\[
y\left(  t\right)  =X_{h,\alpha,1}^{A,B}\left(  t+h\right)  c+\int_{-h}%
^{0}X_{h,\alpha,\alpha}^{A,B}\left(  t-s\right)  g\left(  s\right)
ds:=\varphi\left(  t\right)  ,\ \ -h\leq t\leq0.
\]
Let $t=-h$ we have%
\[
X_{h,\alpha,1}^{A,B}\left(  -h-s\right)  =\left\{
\begin{tabular}
[c]{ll}%
$\Theta,$ & $-h<s\leq0,$\\
$I,$ & $s=-h.$%
\end{tabular}
\ \ \ \right.
\]
Thus $c=\varphi\left(  -h\right)  $. Since $-h\leq t\leq0$, one obtains%
\[
X_{h,\alpha,1}^{A,B}\left(  t-s\right)  =\left\{
\begin{tabular}
[c]{ll}%
$\Theta,$ & $t<s\leq0,$\\
$E_{\alpha,1}\left(  A\left(  t-s\right)  ^{\alpha}\right)  ,$ & $-h\leq s\leq
t,\ 0\leq t-s\leq t+h\leq h.$%
\end{tabular}
\ \ \ \ \ \right.
\]
Thus on interval $-h\leq t\leq0$, one can derive that%
\begin{align}
\varphi\left(  t\right)   &  =X_{h,\alpha,1}^{A,B}\left(  t+h\right)
\varphi\left(  -h\right)  +\int_{-h}^{0}X_{h,\alpha,\alpha}^{A,B}\left(
t-s\right)  g\left(  s\right)  ds\label{q1}\\
&  =X_{h,\alpha,1}^{A,B}\left(  t+h\right)  \varphi\left(  -h\right)
+\int_{-h}^{t}X_{h,\alpha,\alpha}^{A,B}\left(  t-s\right)  g\left(  s\right)
ds+\int_{t}^{0}X_{h,\alpha,\alpha}^{A,B}\left(  t-s\right)  g\left(  s\right)
ds\nonumber\\
&  =E_{\alpha,1}\left(  A\left(  t+h\right)  ^{\alpha}\right)  \varphi\left(
-h\right)  +\int_{-h}^{t}\left(  t-s\right)  ^{\alpha-1}E_{\alpha,\alpha
}\left(  A\left(  t-s\right)  ^{\alpha}\right)  g\left(  s\right)
ds.\nonumber
\end{align}
Having differentiated (\ref{q1}), we obtain%
\begin{align*}
\left(  ^{C}D_{-h^{+}}^{\alpha}\varphi\right)  \left(  t\right)   &  =A%
{\displaystyle\sum\limits_{k=0}^{\infty}}
\frac{A^{k}\left(  t+h\right)  ^{\alpha k}}{\Gamma\left(  1+k\alpha\right)
}\varphi\left(  -h\right)  +\int_{-h}^{t}%
{\displaystyle\sum\limits_{k=1}^{\infty}}
\frac{A^{k}\left(  t-s\right)  ^{\alpha k-1}}{\Gamma\left(  k\alpha\right)
}g\left(  s\right)  ds+g\left(  t\right) \\
&  =AE_{\alpha,1}\left(  A\left(  t+h\right)  ^{\alpha}\right)  \varphi\left(
-h\right)  +A\int_{-h}^{t}E_{\alpha,\alpha}\left(  A\left(  t-s\right)
^{\alpha}\right)  g\left(  s\right)  ds+g\left(  t\right) \\
&  =A\varphi\left(  t\right)  +g\left(  t\right)  .
\end{align*}
Therefore, $g\left(  t\right)  =\left(  ^{C}D_{-h^{+}}^{\alpha}\varphi\right)
\left(  t\right)  -A\varphi\left(  t\right)  $ and the desired result holds.
\end{proof}

Combining Theorems \ref{thm:1} and \ref{thm:2}, we have the following result.

\begin{corollary}
A solution $y\in C\left(  \left[  -h,T\right]  \cap\left(  \left(  p-1\right)
h,ph\right]  ,R^{n}\right)  $ of (\ref{de1}) has a form%
\begin{align*}
y\left(  t\right)   &  =X_{h,\alpha,1}^{A,B}\left(  t+h\right)  \varphi\left(
-h\right)  +\int_{-h}^{0}X_{h,\alpha,\alpha}^{A,B}\left(  t-s\right)  \left[
\left(  ^{C}D_{-h^{+}}^{\alpha}\varphi\right)  \left(  s\right)
-A\varphi\left(  s\right)  \right]  ds\\
&  +\int_{0}^{t}X_{h,\alpha,\alpha}^{A,B}\left(  t-s\right)  f\left(
s\right)  ds.
\end{align*}

\end{corollary}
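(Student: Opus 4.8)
The plan is to exploit the linearity of \eqref{de1} together with uniqueness of solutions, so that the general solution splits into a free (homogeneous) part carrying the history $\varphi$ and a forced part carrying $f$ with vanishing history. Concretely, I would write $y=y_{\mathrm{h}}+y_{\mathrm{p}}$, where $y_{\mathrm{h}}$ solves the homogeneous problem $^{C}D_{-h^{+}}^{\alpha}y_{\mathrm{h}}=Ay_{\mathrm{h}}+By_{\mathrm{h}}(\cdot-h)$ with $y_{\mathrm{h}}=\varphi$ on $[-h,0]$, and $y_{\mathrm{p}}$ solves the nonhomogeneous problem $^{C}D_{-h^{+}}^{\alpha}y_{\mathrm{p}}=Ay_{\mathrm{p}}+By_{\mathrm{p}}(\cdot-h)+f$ with $y_{\mathrm{p}}\equiv\Theta$ on $[-h,0]$. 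Since $^{C}D_{-h^{+}}^{\alpha}$ is linear and the delay term is linear, the sum $y_{\mathrm{h}}+y_{\mathrm{p}}$ satisfies \eqref{de1} and reproduces the initial condition $y=\varphi$ on $[-h,0]$.

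First I would invoke Theorem \ref{thm:2} to represent the free part as
\[
y_{\mathrm{h}}(t)=X_{h,\alpha,1}^{A,B}(t+h)\varphi(-h)+\int_{-h}^{0}X_{h,\alpha,\alpha}^{A,B}(t-s)\left[(^{C}D_{-h^{+}}^{\alpha}\varphi)(s)-A\varphi(s)\right]ds,
\]
which supplies the first two terms of the claimed formula (here I follow the value $c=\varphi(-h)$ obtained inside the proof of Theorem \ref{thm:2}). Next I would invoke Theorem \ref{thm:1} for the forced part with zero history, giving the Duhamel-type convolution $\int X_{h,\alpha,\alpha}^{A,B}(t-s)f(s)\,ds$. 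Adding the two representations then produces exactly the three-term formula of the corollary, valid on each step interval $((p-1)h,ph]$ by the method of steps.

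The one point that requires genuine care is the integration limit of the forced term: Theorem \ref{thm:1} integrates from $-h$, whereas the corollary integrates from $0$. I would resolve this by noting that in the full problem the values of $y$ on $[-h,0]$ are prescribed by $\varphi$ and are completely insensitive to $f$, so $f$ can only act for $s\in(0,t]$; equivalently, under the zero-history normalization of $y_{\mathrm{p}}$ the contribution of $f$ over $[-h,0]$ is absorbed and the surviving forcing is precisely $\int_{0}^{t}X_{h,\alpha,\alpha}^{A,B}(t-s)f(s)\,ds$. I would make this explicit by splitting $\int_{-h}^{t}=\int_{-h}^{0}+\int_{0}^{t}$ and checking that the $[-h,0]$ piece is consistent with the zero-history condition defining $y_{\mathrm{p}}$.

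The main obstacle I anticipate is not the computation but the legitimacy of superposition itself, i.e. that $y=y_{\mathrm{h}}+y_{\mathrm{p}}$ is \emph{the} solution rather than merely \emph{a} candidate. This rests on uniqueness for \eqref{de1}, which I would establish from the step-by-step construction on the successive intervals $((p-1)h,ph]$: on each such interval the delayed term $By(\cdot-h)$ is already known from the previous interval, reducing \eqref{de1} to a linear nonhomogeneous fractional equation whose solution is determined by its left endpoint data. With uniqueness in hand, I would apply $^{C}D_{-h^{+}}^{\alpha}$ to the proposed $y$, use Theorems \ref{thm:1} and \ref{thm:2} to confirm it satisfies \eqref{de1}, and verify it reduces to $\varphi$ on $[-h,0]$, so it must be the solution.
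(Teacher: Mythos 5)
Your proposal matches the paper's approach exactly: the paper proves this corollary simply by superposing Theorem \ref{thm:1} (the forced solution with zero history) and Theorem \ref{thm:2} (the homogeneous solution carrying $\varphi$), which is precisely your decomposition $y=y_{\mathrm{h}}+y_{\mathrm{p}}$. Your additional care about uniqueness via the method of steps and about reconciling the lower integration limit ($-h$ in Theorem \ref{thm:1} versus $0$ in the corollary, harmless since $f$ on $[-h,0]$ never enters equation (\ref{de1})) goes beyond the paper, which states the corollary without further argument.
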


\bigskip

\end{document}